\newtheorem{theorem}{Theorem}[section]
\newtheorem{lemma}[theorem]{Lemma}
\newtheorem {corollary} [theorem]{Corollary}
\theoremstyle{definition}
\theoremstyle{remark}
\numberwithin{equation}{section}
\begin{document}

\title[Analytic normalization of integrable systems ]{Analytic normalization of analytically integrable differential systems near a periodic orbit}

\author[K. Wu]{Kesheng Wu}
\address{Department of Mathematics, Shanghai Jiao Tong University,  Shanghai 200240,  People's Republic of China.}
\email{E-mail: kshengwu@gmail.com}

\author[X. Zhang]{Xiang Zhang}
\address{Department of Mathematics, and MOE--LSC, Shanghai Jiao Tong University,  Shanghai 200240,  People's Republic of China.}
\email{xzhang@sjtu.edu.cn}
\thanks{The second author is partially supported by
NNSF of China grant 11271252, and RFDP of Higher Education of China grant 20110073110054. Both authors are also supported by FP7-PEOPLE-2012-IRSES-316338 of Europe.}

\subjclass[2010]{34A34, 34C20, 34C41, 37G05.}

\date{}

\dedicatory{}

\keywords{Analytic differential systems; analytically integrable; period orbit; normal form; analytic normalization.}

\begin{abstract}
For an analytic differential system in $\mathbb R^n$ with a periodic orbit, we will prove that if the system is analytically integrable around the periodic orbit, i.e. it has $n-1$ functionally independent analytic first integrals defined in a neighborhood of the periodic orbit, then the system is analytically equivalent to its Poincar\'e--Dulac type normal form.  This result is an extension for analytic integrable differential systems around a singularity to the ones around a periodic orbit.

\end{abstract}

\maketitle

\bigskip
\section{Introduction and statement of the main results}\label{s1}

\setcounter{section}{1}
\setcounter{equation}{0}\setcounter{theorem}{0}
Normal form theory has been playing key roles in the study of dynamics for ordinary differential equations and dynamical systems (smooth and discrete ones). Because of its importance, it has been extensively studied, see for instance \cite{AA,Ar,BT2003,Bib,Bi,Br,CLW,CY,Il79,IY2008,Li,MZ,PM1,PM2,Ta,TZ,WZ,WZ2,Zh,Zu1,Zu} and the references therein. In the normal form theory, one of the main topics is to study the existence of analytic normalization for an analytic dynamical system to its normal form. In this direction there are lots of well known results, which involved the works of Poincar\'{e} \cite{Po,Po1}, Siegel \cite{Ar}, Bruno \cite{Br}, Ilyashenko \cite{IY2008} and so on.

Here we mainly concern the analytically integrable systems.  Along this direction Zung \cite{Zu}  showed via torus action that any analytic vector field which is  analytic integrable in a neighborhood of the origin in the non-Hamiltonian sense admits a convergent Poincar\'{e}--Dulac normalization. Zhang \cite{Zh} presented a similar result using analytic methods and provided the explicit expression of the normal form, which was not obtained in \cite{Zu}. Furthermore Zhang \cite{Zh2013} extended the results in \cite{Zh} to degenerate cases and also obtained a result on analytically integrable diffeomorphisms around a fixed point. On the existence of analytic normalization of analytically integrable differential systems, we refer the readers to \cite{LPW}, too.

About the integrability and normalization of a differential system  near a periodic orbit, Yakovenko \cite{Ya93} studied the existence of $C^\infty$ normalization of a planar $C^\infty$ differential system near a periodic orbit to  a very simpler normal form.  Peralta--Salas \cite{Pe08} presented a characterization between integrability and normalizers of smooth vector fields in a region fulled up with periodic orbits.
But as our knowledge it is unsolved that the problem on whether an analytically integrable differential systems around a periodic orbit is analytically equivalent to its Poincar\'e--Dulac normal form.

\smallskip

Consider the analytic differential system
\begin{equation} \label{F1}
\dot{x} = f(x), \qquad    \qquad   x \in\Omega\subset \mathbb{R}^n,
\end{equation}
where the dot denotes the derivative with respect to the time $t$, $\Omega$ is an open subset of $\mathbb R^n$ and $f(x)\in C^\omega(\Omega)$. Here $C^\omega(\Omega)$ denotes the ring of analytic functions defined in $\Omega$. Assume that system \eqref{F1} has a periodic orbit, saying $\Gamma$, located in the region $\Omega$.

We say that system \eqref{F1} is {\it analytically integrable in a neighborhood of $\Gamma$}, if it has $n-1$ functionally independent analytic first integrals defined in the neighborhood of $\Gamma$. Here we say that $k>1$ first integrals defined in $D\subset \Omega$ are {\it functionally independent} if the gradients of the $k$ first integrals have rank $k$ in a full Lebesgue measure subset of $D$. A nonconstant function $H(x)$ is a {\it first integral} of system \eqref{F1} in $D$ if along any orbit located in $D$ of system \eqref{F1}, the function $H$ takes a constant value.

Let $x = \varphi(t)$ be an expression of $\Gamma$ with period $T$. Since system \eqref{F1} is analytic, the periodic solution $\phi(t)$ is also analytic on $\mathbb R$. Taking the transformation $X = x - \varphi(t)$, system \eqref{F1} becomes
\begin{equation} \label{F2}
\dot{X} = f(X + \varphi(t)) - f(\varphi(t)).
\end{equation}
It can be written in the form
\begin{equation}\label{F3}
\dot{X} = A(t) X + g(X, t),  \qquad  \quad  g(X, t) = O(\left| X \right|^2),
\end{equation}
with $A(t)$ analytic and periodic in $t$ of period $T$, and $g(X, t)$ analytic in $X$ and $t$ and periodic in $t$ of period $T$.

By the Floquet theory \cite{GH},  there is a change of coordinates $X = Q(t)Y$ with $Q(t)$ invertible, analytic and periodic of period $T$, under which  system  \eqref{F3} is transformed to
\begin{equation}\label{F3.1}
 \dot{Y} = A Y + h(Y, t) ,
 \end{equation}
where $A$ is a constant matrix (real or complex), and $h=O(|Y|^2)$ is analytic in its variables and periodic in $t$ of period $T$. Thus we reduce system \eqref{F1} near the periodic orbit $\Gamma$ to system \eqref{F3.1} with the constant solution $Y=0$. It is well known that the constant matrix $A$ has a zero eigenvalue with its characteristic direction tangent to the periodic orbit $\Gamma$. In what follows, for  convenience we still use variable $x$ to replace $Y$ in \eqref{F3.1}.

Let $\lambda=(\lambda_1, ..., \lambda_n )$ be the $n$--tuple of eigenvalues of $A$. For the analytic function $h(x,t)$ periodic in $t$, expanding it in Taylor series in $x$ and Fourier series in $t$ as follows
\[
h(x, t) = \sum^n_{j=1} \sum_{l \in \mathbb Z_+^n} \sum_{k \in \mathbb{Z}} a_{l,k,j}  x^l e^{{\bf i} k t} e_j,
\]
where $\mathbf i=\sqrt{-1}$, $\mathbb Z_+$ is the set of nonnegative integers, $e_j$ is the unit vector with its $j$th component equal to $1$, and $x^l=x_1^{l_1}\ldots x_n^{l_n}$.
We call the pseudomonomial $x^l e^{{\bf i} k t} e_j$ in the $j$th component of $h(x,t)$ {\it resonant} if
\[
{\bf i} k + \langle l,  \lambda\rangle - \lambda_j = 0,
\]
where $\langle\cdot,\cdot\rangle$ denotes the inner product of two $n$--dimensional vectors.

System \eqref{F3.1} is in the {\it Poincar\'e--Dulac normal fom}  if $h(x, t)$ contains resonant pseudomonomials only. We should say that the Poincar\'e--Dulac normal form defined here is an extended version of the classical one for autonomous differential systems.

System \eqref{F3.1} is {\it formally equivalent to its Poincar\'e--Dulac normal form} if there exists a tangent to identity transformation of the form
\begin{equation}\label{F3.2}
x = y + \Phi(y,t),
\end{equation}
with $\Phi(y,t)=O(|y|^2)$ a formal series in $y$ and periodic in $t$, which transforms \eqref{F3.1} to a system
\begin{equation}\label{F3.3}
\dot{y} = A y + G(y, t),
\end{equation}
which is in the Poincar\'e--Dulac normal form. Furthermore,
\begin{itemize}

\item If the transformation \eqref{F3.2} is analytic, we say that system \eqref{F3.1} is {\it analytically equivalent} to its Poincar\'e--Dulac normal form.

\item If $\Phi(y,t)$ in the transformation \eqref{F3.2} contains only non--resonant pseudomonomials, i.e $x^l e^{{\bf i} k t} e_j$ in $\Phi(y,t)$ satisfies $\mathbf i k+\langle l,\lambda\rangle\ne 0$,  we say that system \eqref{F3.1} is analytically equivalent to its {\it distinguished normal form}. The transformation \eqref{F3.2} is called {\it distinguished normalization}.
\end{itemize}
We should mention the difference between the resonances of the pseudomonomials in a vector field and in a transformation (or a function).

Now we can state our main results.

\begin{theorem}\label{th1}
Assume that system \eqref{F1} is analytic and has a periodic orbit. If system \eqref{F1} is analytically integrable in a neighborhood of the periodic orbit,  then the system is analytically equivalent to its distinguished normal form in a neighborhood of the periodic orbit.
\end{theorem}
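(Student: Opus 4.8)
I would prove Theorem~\ref{th1} by first producing a \emph{formal} distinguished normalization of the Floquet--reduced system \eqref{F3.1} and then establishing its convergence by means of the analytic integrability. Working with \eqref{F3.1} is legitimate, since the coordinate changes $X=x-\varphi(t)$ and $X=Q(t)Y$ are analytic and $T$--periodic in $t$, hence carry the $n-1$ functionally independent analytic first integrals of \eqref{F1} into $n-1$ functionally independent analytic functions $\ol H_1(x,t),\dots,\ol H_{n-1}(x,t)$ that are $T$--periodic in $t$ and satisfy the transport equations $\p_t\ol H_j+\langle\gx \ol H_j,\,Ax+h(x,t)\rangle=0$. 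Thus \eqref{F3.1} is again analytically integrable near the invariant set $x=0$, and the whole problem becomes intrinsic to \eqref{F3.1}.

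First I would settle the formal problem. Expanding a tangent--to--identity transformation \eqref{F3.2} and the vector field in Taylor series in $y$ and Fourier series in $t$, the determination of the coefficient of a pseudomonomial $y^le^{\mathbf i kt}e_j$ reduces, at each order $|l|=m\ge 2$, to the homological equation governed by the operator $L\big(y^le^{\mathbf i kt}e_j\big)=\big(\mathbf i k+\langle l,\la\rangle-\la_j\big)\,y^le^{\mathbf i kt}e_j$ together with the nilpotent part of $A$. Exactly the pseudomonomials with $\mathbf i k+\langle l,\la\rangle-\la_j=0$ lie in the kernel and must be retained in the normal form \eqref{F3.3}; all the others are removable, and requiring $\Phi(y,t)$ to carry only pseudomonomials with $\mathbf i k+\langle l,\la\rangle\ne 0$ fixes the remaining freedom and renders the normalization unique. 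This yields a formal distinguished transformation conjugating \eqref{F3.1} to a Poincar\'e--Dulac normal form \eqref{F3.3}, i.e. a formal equivalence to the distinguished normal form, and it shows the normal form inherits $n-1$ formal first integrals.

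The heart of the matter, and the step I expect to be hardest, is the convergence of this formal normalization. Here I would use the analytic integrability in the spirit of Zung \cite{Zu} and Zhang \cite{Zh}: the analytic first integrals $\ol H_j$, organized following the torus--action approach, should yield an analytic torus action defined in a full neighborhood of $\Gamma$ and preserving \eqref{F3.1}, whose linear part contains the rotation in $t$ together with the elliptic rotations associated with the eigenvalues of $A$; the sought normalization is then the analytic coordinate change that simultaneously linearizes this action, and its linear part reproduces exactly the resonant Fourier--Taylor structure of \eqref{F3.3}. Two features make this more delicate than the equilibrium case treated in \cite{Zh,Zu}. First, the periodicity introduces the extra index $k$, hence infinitely many resonance relations $\mathbf i k+\langle l,\la\rangle-\la_j=0$ and correspondingly infinitely many potentially small divisors $\mathbf i k+\langle l,\la\rangle-\la_j$; one must show that the resonances forced by integrability cancel precisely the dangerous divisors, so that the series converges uniformly in a strip in $t$. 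Second, the matrix $A$ carries a zero eigenvalue whose eigendirection is tangent to $\Gamma$, so the usual small--divisor and hyperbolicity estimates fail along that direction and must be replaced by the rigidity coming from the first integrals, which confines the nearby orbits to the invariant tori of the action. Establishing the existence and analyticity of this torus action near a periodic orbit---rather than near a singular point---is the main obstacle; once it is in place, the convergence of the distinguished normalization, and hence Theorem~\ref{th1}, follows as in the singular case. Alternatively, one may bypass the torus action and control the normalizing series directly by majorant estimates built from the convergent first integrals, again reducing the difficulty to taming the Fourier divisors and the tangent direction.
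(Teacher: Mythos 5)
Your reduction to the Floquet normal form \eqref{F3.1} and your account of the formal step are consistent with the paper, which carries this out in Lemma \ref{lnf} via the homological operator $L=\partial_t+\langle\partial_y,Ay\rangle-Ay$ whose spectrum on pseudomonomials is $\{\mathbf i k+\langle l,\lambda\rangle-\lambda_j\}$. The gap is in the convergence step, which you leave as two undeveloped strategies. The single idea that makes the paper's proof work is absent from your proposal: analytic integrability does not require one to ``tame'' small divisors --- it \emph{eliminates} them. Concretely, the paper first shows (Lemma \ref{la2}) that the first integrals push forward to resonant first integrals of the normal form, then invokes the bound of Chen--Yi--Zhang \cite{CY}: the number of functionally independent first integrals is at most the rank $R_\lambda$ of the $\mathbb Z$--module of resonance vectors $\{(k,l):\mathbf i k+\langle\lambda,l\rangle=0\}$. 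Hence $R_\lambda\ge n-1$, and solving the resulting linear system forces $\lambda_j=\nu_j\mathbf i$ with $\nu_j\in\mathbb Q$ over a common denominator. Consequently every \emph{nonzero} divisor $\mathbf i k+\langle l,\lambda\rangle-\lambda_j$ is a nonzero rational multiple of $\mathbf i/\nu$ and is uniformly bounded below by some $\epsilon>0$ (Lemma \ref{la3}). With that uniform bound the convergence is a routine majorant argument plus the implicit function theorem; without it, neither of your two routes closes.

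Two further points you do not address. First, the torus--action route you sketch ``in the spirit of Zung'' is precisely what the paper avoids, because constructing an analytic torus action near a periodic orbit (rather than a singular point) is an open difficulty you yourself flag as ``the main obstacle''; asserting that the theorem ``follows as in the singular case'' once it is in place is not a proof. Second, unlike the singular case treated in \cite{Zh,Zh2013}, here one cannot conclude that $A$ is diagonalizable, so the homological equations carry the nilpotent terms $\sigma_j\phi^{l-e_{j-1}+e_j}_s$; the paper handles these in the majorant scheme by rescaling coordinates so that the off--diagonal entries $\sigma_j$ are small enough that $\frac{\partial\Gamma}{\partial h}(0,t,0)=1-\varepsilon^{-1}\sigma-d\sigma(n-1)>0$. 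Your proposal would need both the arithmetic lemma and this treatment of the non--semisimple part to be complete.
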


Theorem \ref{th1} will be proved in the next section.

We should mention that Theorem \ref{th1} is an extension of the results given in \cite{Po,Po1,LPW,Zh,Zh2013,Zu} for local analytically integrable differential systems around a singularity to the ones around a periodic orbit. As shown in \cite{Zh,Zh2013} for analytic integrable differential systems \eqref{F1} around a singularity $S$ we must have that the Jordan normal form of $\partial_x f(S)$ is diagonal, where $\partial_x f(S)$ denotes the Jacobian matrix of $f$ at $S$.
Here in general we do not know whether the Jordan normal form of $A$ in \eqref{F3.1} is diagonal. And so it increases the difficulty in the proof of the convergence of the normalization from system \eqref{F3.1} to its distinguished normal form.

Next result exhibits the properties of the characteristic exponents of periodic orbits of analytically integrable differential system.

\begin{corollary}\label{la4}
If the analytic differential system \eqref{F1} has $n - 1$ functionally independent analytic or formal first integrals around a periodic orbit, then the characteristic exponents of the periodic orbit satisfy
\[
\lambda_1=\nu_1 \mathbf i,\ldots, \lambda_{n-1}=\nu_{n-1} \mathbf i, \lambda_n=\nu_n\mathbf i, \quad \nu_j\in \mathbb Q.
\]
\end{corollary}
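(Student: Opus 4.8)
The plan is to read the eigenvalue conditions off the resonances that the first integrals are forced to carry, working with the linearization of the reduced system \eqref{F3.1}. Since the coordinate changes leading from \eqref{F1} to \eqref{F3.1} are analytic and $T$--periodic and do not alter the characteristic exponents $\lambda=(\lambda_1,\dots,\lambda_n)$ of $A$, and since the existence of $n-1$ functionally independent analytic (resp. formal) first integrals is preserved, it suffices to prove the claim for \eqref{F3.1}. In the analytic case one may first apply Theorem \ref{th1} to replace \eqref{F3.1} by its distinguished normal form, while in the formal case one uses the formal normalization; either way we are reduced to a system carrying $n-1$ functionally independent first integrals $H^{(1)},\dots,H^{(n-1)}$, each periodic in $t$. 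I will also use the standard fact that, the tangent direction to $\Gamma$ being a periodic solution of the variational equation, its Floquet multiplier equals $1$; hence the exponent in the tangent direction lies in $\mathbf i\,\mathbb Z$.

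The second step extracts resonances. Writing $H^{(i)}=\widehat H^{(i)}+\cdots$ with $\widehat H^{(i)}$ the lowest order homogeneous part in the space variable, the first integral identity $\partial_tH^{(i)}+\nabla H^{(i)}\cdot(Ay+h)=0$ gives at lowest order $\partial_t\widehat H^{(i)}+(Ay)\cdot\nabla\widehat H^{(i)}=0$, so each $\widehat H^{(i)}$ is a first integral of the linear $T$--periodic system $\dot y=Ay$. Splitting $A=S+N$ into its semisimple and nilpotent parts, the operator $L_0:=\partial_t+(Ay)\cdot\nabla$ decomposes as $L_0=L_S+L_N$ with $L_S,L_N$ commuting, $L_N$ nilpotent, and $L_S$ acting on the pseudomonomial $y^le^{\mathbf i kt}$ by the scalar $\mathbf i k+\langle l,\lambda\rangle$. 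Since $L_S+L_N$ is invertible on every $L_S$--eigenspace with nonzero eigenvalue, $\ker L_0$ is spanned by resonant pseudomonomials, so every $y^le^{\mathbf i kt}$ occurring in any $\widehat H^{(i)}$ satisfies
\[
\mathbf i k+\langle l,\lambda\rangle=0 .
\]

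The heart of the proof, and the step I expect to be the main obstacle, is to turn functional independence into linear independence of exponent vectors transverse to the tangent direction. For resonant pseudomonomials the Fourier index $k$ is a linear function of the exponent $l$, so functional independence of the $\widehat H^{(i)}$ is equivalent to linear independence of their exponent vectors; after choosing the integrals so that their lowest order parts stay functionally independent, this yields $n-1$ linearly independent $l^{(1)},\dots,l^{(n-1)}\in\mathbb Z^n$ with $\langle l^{(i)},\lambda\rangle=-\mathbf i k_i$, $k_i\in\mathbb Z$. The delicate point is to show that these vectors are moreover independent from the tangent eigendirection $e_{\mathrm{tan}}$, equivalently that no integral combination measures the phase along $\Gamma$; this reflects the fact that the $H^{(i)}$, being constant on the single orbit $\Gamma$, are blind to its tangent direction, and it is here that the possible non--diagonalizability of $A$ and the degeneracy of the tangent exponent (zero modulo $\mathbf i\,\mathbb Z$), flagged after Theorem \ref{th1}, make the bookkeeping subtle.

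Once this independence is in hand, the conclusion is elementary. Adjoining to the $n-1$ resonance relations the relation $\langle e_{\mathrm{tan}},\lambda\rangle=\mathbf i m$ with $m\in\mathbb Z$ coming from the unit tangent multiplier produces $n$ relations of the form $B\lambda=\mathbf i\,c$, where $B$ is the integer matrix whose rows are $l^{(1)},\dots,l^{(n-1)},e_{\mathrm{tan}}$ and $c\in\mathbb Z^{n}$. By the independence just established $B$ is invertible, so
\[
\lambda=\mathbf i\,B^{-1}c\in\mathbf i\,\mathbb Q^{n},
\]
since $B^{-1}$ is rational by Cramer's rule. Thus every $\lambda_j=\nu_j\mathbf i$ with $\nu_j\in\mathbb Q$, as claimed; and because the argument uses only the linearization together with the existence of $n-1$ functionally independent first integrals, it applies verbatim to formal first integrals.
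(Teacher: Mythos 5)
Your reduction to the normal form system and the observation that the lowest order parts $\widehat H^{(i)}$ are resonant first integrals of the linear system $\dot y=Ay$ are sound; this is essentially statement $(b)$ of Lemma \ref{la2} restricted to lowest order. The problem is that the step you yourself call ``the heart of the proof'' --- converting functional independence of the first integrals into $n-1$ resonance vectors $(k_i,l^{(i)})$ which, together with the tangent relation, form an invertible integer matrix --- is asserted rather than proved. Two distinct things are missing. First, the claim that ``functional independence of the $\widehat H^{(i)}$ is equivalent to linear independence of their exponent vectors'' is not correct as stated: each $\widehat H^{(i)}$ is a sum of many resonant pseudomonomials with different exponents $l$, so no single exponent vector is attached to it; moreover the lowest order parts of functionally independent first integrals need not remain functionally independent, and your parenthetical ``after choosing the integrals so that their lowest order parts stay functionally independent'' hides a Ziglin--type replacement lemma that has to be proved. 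What actually converts ``$n-1$ functionally independent (formal) first integrals'' into ``the resonance lattice $\{(k,l):\ \mathbf i k+\langle\lambda,l\rangle=0\}$ has rank at least $n-1$'' is Theorem 1.1 of \cite{CY}, which is precisely the tool the paper invokes at the start of the proof of Lemma \ref{la3}; you would need either to cite it or to reprove it.

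Second, even granting rank $\ge n-1$, your concluding linear algebra requires the $n$-th relation coming from the tangent multiplier to be independent of the other $n-1$, and you explicitly leave this open (``once this independence is in hand, the conclusion is elementary''). The paper does not need such an a priori independence statement: it normalizes $\lambda_n=0$ and runs the explicit case analysis of Lemma \ref{la3} --- the dichotomy \eqref{F7.1} versus \eqref{F7.2} when $R_\lambda=n-1$, and the separate case $R_\lambda=n$ --- after which the formulas $\lambda_j=\mu_j\mathbf i/\nu$ give the corollary directly. So your proposal reproduces the easy bookends of the paper's argument (reduction to the normal form, resonance of the first integrals, and the final Cramer's rule computation) but leaves its central step unestablished; as written it is not a complete proof.
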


The proof of Corollary \ref{la4} can be obtained as an easy consequence of the proof of Lemma \ref{la3}. The details will be omitted.

This last result presents a new phenomena on the characteristic exponents of a periodic orbit for an analytically integrable differential system.
Recall that a {\it characteristic multiplier} of the periodic orbit $\Gamma$ of system \eqref{F1} is by definition the eigenvalues of the monodromy operator of the linear part, i.e. $\dot x=A(t)x$, of system \eqref{F3}. A number $\mu$ is a {\it characteristic exponent} of the periodic orbit $\Gamma$ if $e^{\mu T}$ is a characteristic multiplier, where $T$ is the period of $\Gamma$. We must say that if $\mu$ is a characteristic exponent, then $\mu+2m\pi \mathbf i/T$, $m\in\mathbb Z$, are also  characteristic exponents. Here we do not consider these later exponents. In fact, here the characteristic exponents are the eigenvalues of the constant matrix $A$ in \eqref{F4}.

The remaining part is the proof of our main results.

\section{Proof of Theorem \ref{th1}}\label{s3}

In order to prove Theorem \ref{th1}, we first prove the existence of distinguished formal normal form of  system \eqref{F1}.
\smallskip

\subsection{Distinguished formal normal form}\label{s3.1}

As we described in Section 1,  system \eqref{F1} near the periodic orbit $\Gamma:\, x = \varphi(t)$ with period $T$ can be reduced by an invertible analytic change of variables to the normal form system
\begin{equation}\label{F4}
\dot{x} = A x + F(x,t), \qquad \qquad x \in \mathbb{R}^n, \quad t \in \mathbb{R}
\end{equation}
where the constant matrix $A$ has a zero eigenvalue and $F(x,t)=O(|x|^2)$ is analytic and periodic in $t$.

\begin{lemma}\label{lnf}
The analytic periodic differential system \eqref{F4} is formally equivalent to its distinguished normal form.
\end{lemma}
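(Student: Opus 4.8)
The plan is to construct the formal normalizing transformation $x = y + \Phi(y,t)$ order by order in $|y|$, showing that at each degree the transformation can be chosen to kill all non-resonant pseudomonomials in the vector field while keeping $\Phi$ itself free of resonant pseudomonomials (this is what makes the normalization \emph{distinguished}). I would first write $\Phi(y,t)=\sum_{m\ge 2}\Phi_m(y,t)$ and $F$, $G$ as sums of their homogeneous-in-$y$ pieces $F_m$, $G_m$, each being a finite Taylor polynomial in $y$ with coefficients that are analytic periodic functions of $t$, hence expandable in Fourier series $e^{\mathbf i k t}$.

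Substituting the transformation into \eqref{F4} and matching terms of degree $m$ yields the homological equation, which in the present periodic setting reads
\[
\partial_t \Phi_m + \partial_y \Phi_m \cdot A y - A\,\Phi_m = F_m - G_m + (\text{known lower-order terms}).
\]
The plan is to diagonalize the action of the left-hand operator on the basis of pseudomonomials $y^l e^{\mathbf i k t} e_j$. On such a basis element the operator acts by the scalar $\mathbf i k + \langle l,\lambda\rangle - \lambda_j$, exactly the resonance quantity defined in the statement. For each non-resonant pseudomonomial appearing on the right this scalar is nonzero, so I can solve for the corresponding coefficient of $\Phi_m$ and set the matching coefficient of $G_m$ to zero; for each resonant pseudomonomial the scalar vanishes, so I am forced to put that term into $G_m$ and I set the corresponding coefficient of $\Phi_m$ to zero. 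This last choice is precisely the distinguished prescription, and it makes the solution of the homological equation unique.

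The one subtlety I would flag before this reduction is that, as the authors themselves point out after Theorem \ref{th1}, the matrix $A$ need not be diagonalizable. If $A$ is only in Jordan form, the operator $\Phi_m\mapsto \partial_y\Phi_m\cdot Ay - A\Phi_m$ is no longer diagonal on the pseudomonomial basis but merely block-triangular (upper triangular with the resonance scalars on the diagonal). I would therefore handle this by ordering the basis compatibly with the Jordan structure and inverting the operator on the non-resonant subspace using its triangular form: the diagonal entries are the nonzero resonance scalars, so the restriction to the non-resonant pseudomonomials is invertible, and the resonant pseudomonomials span exactly the kernel, into which $G_m$ is placed.

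The \textbf{main obstacle} is genuinely just this non-diagonalizable case: I must verify that the resonant pseudomonomials still span precisely the kernel of the homological operator and that the non-resonant ones form a complementary invariant-enough subspace on which the operator is invertible, so that the splitting $F_m = (\text{resonant part})\oplus(\text{non-resonant part})$ is well defined and the induction closes at every degree $m$. Once this is established the formal series $\Phi$ and $G$ are determined uniquely, $G$ contains resonant pseudomonomials only by construction, and $\Phi$ contains only non-resonant ones, which is exactly the assertion of Lemma \ref{lnf}. Note that convergence is not claimed here — that is the content of the subsequent analysis using the integrability hypothesis — so this lemma is purely the formal existence statement.
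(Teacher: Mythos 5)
Your construction is essentially the paper's: expand in homogeneous degrees in $y$ and Fourier modes in $t$, act with the homological operator $L=\partial_t+\langle\partial_y\cdot,Ay\rangle-A\cdot$ on the pseudomonomial basis, set the resonant part of $\Phi$ to zero, absorb the resonant part of the right-hand side into $G$, and invert $L$ on the non-resonant part (the paper gets the spectrum $\{\mathbf{i}k+\langle l,\lambda\rangle-\lambda_j\}$ from Bibikov's lemma rather than from an explicit triangularization, but the mechanism is the same). One correction, though, to how you frame your ``main obstacle'': when $A$ has nontrivial Jordan blocks the resonant pseudomonomials do \emph{not} span the kernel of $L$, so the verification you propose would fail as stated. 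For instance, with $n=2$, $\lambda_1=\lambda_2=0$ and $A$ lower triangular with $\sigma_2\neq 0$ below the diagonal, the pseudomonomial $y_1^2e_1$ is resonant but $L(y_1^2e_1)=-\sigma_2\,y_1^2e_2\neq 0$; the kernel is strictly contained in the resonant subspace. What is actually needed --- and is what the paper asserts --- is weaker: both the resonant and the non-resonant subspaces are invariant under $L$ (because $\sigma_j\neq 0$ forces $\lambda_{j-1}=\lambda_j$, so the off-diagonal contributions do not change the resonance scalar of a pseudomonomial), and $L$ restricted to the non-resonant subspace is invertible since it is triangular with the nonzero resonance scalars on the diagonal. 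With the choice $\Phi_{m,\mathrm{res}}=0$ the resonant part of the homological equation reduces to $G_{m,\mathrm{res}}=W_{m,\mathrm{res}}$ irrespective of whether $L$ vanishes on the resonant subspace, so your construction still closes at every degree; only the justification needs this adjustment.
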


\begin{proof}
Assume that system \eqref{F4} is transformed to
\begin{equation}\label{FF1}
\dot{y} = A y + G(y,t)
\end{equation}
via the transformation
\begin{equation}\label{FF2}
x = y + \Phi(y,t),
\end{equation}
where $G(y,t)$ and $\Phi(y,t)$ are formal series in $y$ without constant and linear terms and periodic in $t$ of period $T$. Then $\Phi(y,t)$ and $G(y,t)$ satisfy the equations
\begin{equation}\label{FF2.1}
\partial_t \Phi + \langle\partial_y \Phi, A y\rangle - A \Phi = F(y + \Phi, t) - \langle\partial_y \Phi , G\rangle - G,
\end{equation}
where $\partial_t\Phi$ denotes the partial derivative of $\Phi$ with respect to $t$, and $\partial_y\Phi$ denotes the Jacobian matrix of $\Phi$ with respect to $y$.

Expanding $F,G,\Phi$ in Taylor series in $y$
\[
H(y,t)=\sum\limits_{j=2}\limits^\infty H_j(y,t), \qquad H\in\{F,G,\Phi\},
\]
where $H_j$ are homogeneous polynomials in $y$ of degree $j$ with coefficients periodic functions in $t$ of period $T$.
Substituting these expansions in \eqref{FF2.1} and equating the homogeneous terms in $y$ with the same degree gives
\begin{equation}\label{F5}
\partial_t \Phi_s + \langle\partial_y \Phi_s, A y\rangle - A \Phi_s = [F]_s - \sum^{s-1}_{j=2}\partial_y \Phi_j G_{s+1-j} - G_{s},
\end{equation}
where $[F]_s$ are inductively known vector--valued homogeneous polynomial in $y$ of degree $s$ obtained after  re--expanding $F\left(y + \sum\limits_{j=2}\limits^{s-1}\Phi_j(y,t), t\right)$ in the power series of $y$.

Set
\begin{equation}\label{F5.0}
W_s(y,t)=[F]_s - \sum^{s-1}_{j=2}\partial_y \Phi_j G_{s+1-j}.
\end{equation}
Clearly $W_s$ are inductively known. Expanding the coefficients of $V_s$, $V\in\{\Phi, G, W \}$, in the Fourier series of $t$, we get
\[
V_s(y,t)=\sum\limits_{k\in\mathbb Z}V_s^k(y)e^{\mathbf i kt}.
\]

For studying the existence of solutions of \eqref{F5}, we need the following result, which is due to Bibikov \cite{Bib}.
\begin{lemma}\label{la1}
Denote by $\mathcal{G}^r(\mathbb{F})$, $\mathbb F=\mathbb R$ or $\mathbb C$, the linear space of n-dimensional vector--valued homogeneous
polynomials of degree $r$ in $n$ variables with coefficients in $\mathbb{F}$. Let $A$ and $B$ be two
square matrices with entries in $\mathbb{F}$, and their n--tuples of eigenvalues be $\lambda$ and $\kappa$,
respectively. Defining an operator $L^*$ on $\mathcal{G}^r(\mathbb{F})$ by
\[
L^* h = \langle\partial_x h, A x\rangle - B h,  \qquad \qquad h \in \mathcal{G}^r(\mathbb{F}).
\]
Then the spectrum of the operator $L^*$ is
\[
\sigma(L^*) := \{\langle l,\lambda\rangle - \kappa_j; \,\, l \in \mathbb{Z}_+^n, |l|=r, j=1,...,n\}.
\]
\end{lemma}

Set $\mathcal{G}^s(\mathbb{F}, t)$  be the $\mathbb F$--linear space  spanned by the base
\[
\left\{x^l e^{{\bf i} k t} e_j; \,\,\,\, l \in \mathbb{Z}_+^n, \,\, |l|=s, \,\,k\in \mathbb Z, \,\, j=1,\ldots,n \right\}.
\]
Defining a linear operator on $\mathcal G^s(\mathbb F,t)$ by
\[
L := \partial_t + \langle\partial_y, A y\rangle - A.
\]
Then it follows from Lemma \ref{la1} that the spectrum of $L$ in  $\mathcal{G}^s(\mathbb{F}, t)$ is
\[
\{ {\bf i} k + \langle l, \lambda\rangle  - \lambda_j;\,\, k \in \mathbb{Z}, l \in \mathbb{Z}_+^n, |l| = s,  j=1,...,n\}.
\]

Separate $\mathcal{G}^s(\mathbb{F}, t) = \mathcal{G}_0^s(\mathbb{F}, t) \oplus \mathcal{G}_1^s(\mathbb{F}, t)$
in such a way that $\mathcal G_1^s(\mathbb F,t)$ is formed by non--resonant pseudomonomials and $\mathcal G_0^s(\mathbb F,t)$ is formed by resonant pseudomonomials.  Clearly $L$ is invertible on $\mathcal G_1^s(\mathbb F,t)$ and $L(\mathcal G_0^s(\mathbb F,t))\subset\mathcal G_0^s(\mathbb F,t)$.
 Of course, if $A$ is diagonal then $L|_{\mathcal G_0^s(\mathbb F,t)}=0$.

According to the decomposition of $\mathcal G^s(\mathbb F,t)$, we separate the right--hand side of \eqref{F5} in two parts with one component resonant and another one nonresonant. Then equation \eqref{F5}  can be written in two equations
\begin{eqnarray}\label{F5.1}
L \Phi_{s,r}   & = &  W_{s,r} - G_{s,r},\\
L \Phi_{s,nr} &=& W_{s,nr} - G_{s,nr},
\label{F5.2}
\end{eqnarray}
where $r$ and $nr$ in the subscription denote the resonant and nonresonant terms, and $V_s=V_{s,nr}+V_{s,r}$, $V\in\{\Phi, G , W\}$ with $W_s$ defined in \eqref{F5.0},

For equation \eqref{F5.1} we choose
\[
G_{s,r} = W_{s,r}.
\]
Then the equation has always the solution $\Phi_{s,r} = 0$.

For equation \eqref{F5.2} the operator $L$ is invertible in $\mathcal G_1^s(\mathbb F,t)$, so for any $G_{s,nr}\in\mathcal G_1^s(\mathbb F,t)$ equation \eqref{F5.2} has a unique solution in $\mathcal G_1^s(\mathbb F,t)$. We set $G_{s,nr}=0$, equation \eqref{F5.2} has the unique solution
\[
\Phi_{s,nr} = L^{-1}(W_{s,nr}).
\]
Clearly $\Phi_{s,nr}\in \mathcal G_1^s(\mathbb F,t)$.

From the above construction, we get that
\[
\Phi(y,t)=\sum\limits_{s=2}\limits^\infty \Phi_{s,nr}(y,t),
\]
consists of only nonresonant pseudomonomials. So the formal normalization $x = y + \Phi(y,t)$ is distinguished, and the distinguished formal normal form is of the form
\[
\dot{y} = A y + \sum^\infty_{s = 2} G_{s,r}(y, t).
\]
This proves that system \eqref{F4} is formally equivalent to its distinguished normal form. We complete the proof of the lemma.
\end{proof}

Next we will study the relation between the first integrals of the original system \eqref{F4} and of the distinguished normal form system \eqref{FF1}. Furthermore we will present some properties on the resonance of eigenvalues of $A$ when system \eqref{F1} is analytically integrable in a neighborhood of the periodic orbit $\Gamma$.

\subsection{First integrals and resonances}\label{s3.2}
In this subsection, we first study the structure of first integrals for the distinguished normal form systems of system \eqref{F1} with first integrals.

Recall that a pseudomonomial $y^l e^{{\bf i} k t}$ in a (vector--valued) function is {\it resonant} if
 \[
 {\bf i} k + \langle \lambda, l\rangle = 0,
 \]
where $\lambda = (\lambda_1, ..., \lambda_n )$ are the $n$--tuple of eigenvalues of the matrix $A$. A periodic function   $W(y,t)$ is {\it resonant} if its expansion
\[
W(y,t) = \sum_{l \in \mathbb Z_+^n} \sum_{k \in \mathbb{Z}} a_{l,k}  x^l e^{{\bf i} k t},
\]
has all its pseudomonomials resonant.

The next result characterizes the first integrals of a distinguished normal form system of system \eqref{F4} with first integrals around the trivial solution $x=0$.

\begin{lemma}\label{la2}
Assume that system \eqref{F4} has an analytic or a formal first integral $H(x,t)$ which  is periodic in $t$ of period $T$ and that system \eqref{FF1} is the distinguished normal form of \eqref{F4} under the
 normalization \eqref{FF2}. The following statements hold.
 \begin{itemize}
 \item[$(a)$] $\tilde{H}(y,t) := H(y + \Phi(y, t), t)$ is a first integral of \eqref{FF1}.
 \item[$(b)$] $\tilde H(y,t)$ is resonant.
 \end{itemize}
\end{lemma}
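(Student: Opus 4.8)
For statement $(a)$, the plan is to verify directly that $\tilde H$ is annihilated by the Lie derivative along \eqref{FF1}, using only the conjugacy identity \eqref{FF2.1} so that the argument is valid for both analytic and formal $H$. Writing $x=y+\Phi(y,t)$ and applying the chain rule gives $\partial_t\tilde H=\partial_tH+\langle\partial_xH,\partial_t\Phi\rangle$ and $\partial_y\tilde H=(I+\partial_y\Phi)^{\top}\partial_xH$, where all derivatives of $H$ are evaluated at $(y+\Phi,t)$. Substituting into $\partial_t\tilde H+\langle\partial_y\tilde H,Ay+G\rangle$ and collecting terms, the combination $\partial_t\Phi+\partial_y\Phi\,(Ay+G)$ appears; by \eqref{FF2.1} this equals $A\Phi+F(y+\Phi,t)-G$. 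After the two contributions $\pm\langle\partial_xH,G\rangle$ cancel, the expression collapses to $\partial_tH+\langle\partial_xH,A(y+\Phi)+F(y+\Phi,t)\rangle$, which is exactly the Lie derivative of $H$ along \eqref{F4} evaluated at $x=y+\Phi$, hence zero. Thus $\tilde H$ is a first integral of \eqref{FF1}.

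For statement $(b)$, I would introduce the scalar Lie derivative operator $D:=\partial_t+\langle\partial_y\,\cdot\,,Ay+G\rangle=D_0+D_1$, where $D_0:=\partial_t+\langle\partial_y\,\cdot\,,Ay\rangle$ and $D_1:=\langle\partial_y\,\cdot\,,G\rangle$, acting on scalar periodic formal series. On the basis $y^le^{\mathbf i kt}$ the operator $D_0$ is diagonal with eigenvalue $\mathbf i k+\langle l,\lambda\rangle$ (this is Lemma \ref{la1} with the second matrix equal to zero), so resonant function-pseudomonomials are precisely $\ker D_0$ and $D_0$ is injective on the nonresonant ones. The key computation is that $D_1$ respects the resonance grading: since \eqref{FF1} is in distinguished normal form, each component $G_j$ contains only vector-field-resonant pseudomonomials $y^le^{\mathbf i kt}$ with $\mathbf i k+\langle l,\lambda\rangle-\lambda_j=0$, and multiplying such a term against $\partial_{y_j}(y^pe^{\mathbf i mt})$ produces a pseudomonomial whose function-resonance value equals $\mathbf i m+\langle p,\lambda\rangle$, i.e. the resonance value of the input is preserved.

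Consequently $D=D_0+D_1$ maps resonant functions to resonant functions and nonresonant functions to nonresonant functions, so splitting $\tilde H=\tilde H^{\mathrm{res}}+\tilde H^{\mathrm{nr}}$ and using $D\tilde H=0$ from part $(a)$ yields $D\tilde H^{\mathrm{nr}}=0$ separately. Because $G=O(|y|^2)$, the operator $D_1$ strictly raises the $y$-degree while $D_0$ preserves it; writing $\tilde H^{\mathrm{nr}}=\sum_{s\ge 0}\tilde H^{\mathrm{nr}}_s$ and reading $D\tilde H^{\mathrm{nr}}=0$ degree by degree gives $D_0\tilde H^{\mathrm{nr}}_s=0$ once the lower-degree parts are known to vanish, whence $\tilde H^{\mathrm{nr}}_s=0$ by injectivity of $D_0$ on nonresonant terms. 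An induction on $s$ then forces $\tilde H^{\mathrm{nr}}=0$, so $\tilde H$ is resonant.

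The main obstacle is the resonance-preservation property of $D_1$: everything in $(b)$ hinges on the precise bookkeeping that the vector-field resonance condition on $G$ translates into preservation of the function-resonance value, and this is exactly the point where the difference between the two notions of resonance stressed in Section \ref{s1} must be handled with care. Once this is established, the degree-graded induction together with the invertibility of $D_0$ on the nonresonant subspace is routine.
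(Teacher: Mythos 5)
Your proposal is correct and follows essentially the same route as the paper: part $(a)$ is the same chain-rule computation combined with the conjugacy identity \eqref{FF2.1}, and part $(b)$ is the same degree-by-degree induction resting on the spectrum of $\tilde L=\partial_t+\langle\partial_y\cdot,Ay\rangle$ together with the observation that pairing a vector-field-resonant $G_j$ against $\partial_{y_j}$ of a function-resonant term preserves the function-resonance value. (One small caveat: when $A$ has nontrivial Jordan blocks your $D_0$ is only triangular, not diagonal, so its kernel may be strictly smaller than the resonant subspace; but your argument only uses that $D_0$ preserves the resonant/nonresonant splitting and is injective on the nonresonant part, which still holds.)
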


\begin{proof} $(a)$
From the assumption, we know that the first integral $H(x,t)$ of system \eqref{F4} satisfies
\[
\partial_t H(x,t) + \langle \partial_x H(x,t), A x + F(x,t)\rangle = 0.
\]
Substituting $x=y+\Phi(y,t)$ to this last equality, we get
\begin{eqnarray*}
&& \partial_t H(y + \Phi(y,t),t) + \langle \partial_x H(y + \Phi(y,t),t),A y\rangle \\
&& \qquad =-
\langle \partial_x H(y + \Phi(y,t),t), A \Phi(y,t) + F(y + \Phi(y,t),t)\rangle.
\end{eqnarray*}
On the other hand, some direct calculations show that
\begin{eqnarray*}
\partial_t \tilde{H}(y,t) &=& \partial_t H(y + \Phi(y,t),t) + \partial_x H(y + \Phi(y,t), t) \partial_t \Phi(y,t), \nonumber \\
\partial_y \tilde{H}(y,t) &=& \partial_x H(y + \Phi(y,t),t) ( E+ \partial_y \Phi),  \nonumber
\end{eqnarray*}
where $E$ is the $n$th order unit matrix.

Using these equalities, we get that
\begin{eqnarray}
\partial_t \tilde{H}(y,t) &+ &\langle\partial_y \tilde{H}(y,t), Ay + G\rangle \nonumber\\
&=& \partial_t H(y + \Phi, t) + \partial_x H(y + \Phi, t)\partial_t \Phi \nonumber \\
                                                                    & & + \langle \partial_x H(y+\Phi, t)(1 + \partial_y \Phi), Ay + G\rangle    \nonumber \\
                                                               &=& \left\langle\partial_x H(y+\Phi,t),\, - A \Phi -F(y + \Phi, t)\right.  \\
                                                               & & \qquad\qquad\qquad\quad \left.  +\partial_t \Phi + G +   \partial_y \Phi (Ay+G)\right\rangle \equiv 0.    \nonumber
\end{eqnarray}
where in the last equality we have used \eqref{FF2.1}, i.e. the transformation $\Phi$ from \eqref{F4} to its distinguished normal form \eqref{FF1} satisfies the equality
\[
\partial_t \Phi + \langle\partial_y \Phi, A y\rangle - A \Phi = F(y + \Phi, t) - \langle\partial_y \Phi , G\rangle - G.
\]
This proves that $\tilde{H}(y,t)$ is a first integral of system \eqref{FF1}. So the statement holds.

\smallskip

\noindent $(b)$
Write
\begin{equation}\label{FFel}
\tilde{H}(y,t) = \sum^{\infty}_{k=l} \tilde{H}_k (y,t),
\end{equation}
with $l \in \mathbb N$ and $ \tilde{H}_k(y,t) \in \mathcal{G}^k (\mathbb{F},t)$, $k=l,l+1,\ldots$ Since $\tilde{H}(y,t)$ is a first integral
of system \eqref{FF1} by statement $(a)$, it follows that
\begin{equation}\label{FFfl}
\partial_t\tilde{H}_l + \langle\partial_y \tilde{H}_l, Ay\rangle = 0.
\end{equation}
 We define a new linear operator on $\mathcal{G}^k (\mathbb{F},t)$
\[
\tilde{L} := \partial_t + \langle \partial_y, A y\rangle.
\]
It follows from Lemma \ref{la1} that the spectrum of $\tilde{L}$ is
\[
\{ {\bf i} k + \langle\lambda, s\rangle;\,\,  s \in \mathbb{Z}_+^n, |s|=l, k \in \mathbb{\mathbb{Z}}\}.
\]
Working in a similar way as that in the last subsection we can prove that equation \eqref{FFfl} has only solutions $\tilde{H}_l$ which consist of resonant pseudomonomials.

Now we will use the induction to prove  that all $\tilde{H}_k $, $k=l+1,l+2,\ldots$, are resonant. By induction we assume that for any given $m > l$, $\tilde{H}_j$, $j=l, ..., m-1$, are all resonant.

By the expansion \eqref{FFel} of the first integral $\tilde H(y,t)$, some easy computations show that
\begin{equation}\label{FFfl1}
\tilde{L}(\tilde{H}_m(y,t)) + \sum^{m}_{j=2} \left\langle\partial_y \tilde{H}_{m+1-j}(y,t), G_j(y,t) \right\rangle=0,
\end{equation}
where $\tilde H_s=0$ for $s<l$.
We note that $G_j$ and $\tilde{H}_{m+1-j}$ are resonant homogeneous polynomials in $y$ in a vector field and in a function, respectively. Then some further calculations show that all the terms in the last summation of \eqref{FFfl1} are resonant as a function. So we get from the spectrum of $\tilde{L}$ that the solution $\tilde{H}_m (y,t)$ of equation \eqref{FFfl1} consists of resonant pseudomonomials only. By induction we have proved statement $(b)$ and consequently the lemma.
\end{proof}

Next we will study the properties of the nonresonant eigenvalues of $A$ for analytically integrable system \eqref{F4}, which is a key point in the proof of our main result.
\begin{lemma}\label{la3}
If system \eqref{FF1} has $n - 1$ functionally independent analytic or formal first integrals, then there exists $\epsilon > 0$ such that for all ${\bf i}k + \langle m, \lambda\rangle - \lambda_i \neq 0,
k \in \mathbb{Z}, m \in \mathbb{Z}_+^n, |m| \geq 2$, we have
\[
\left|{\bf i}k + \langle m, \lambda\rangle - \lambda_i\right| > \epsilon.
\]
\end{lemma}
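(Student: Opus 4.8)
The plan is to reduce the desired lower bound to a discreteness statement about a finitely generated subgroup of $(\mathbb{C},+)$, and then to extract that discreteness from the integrability hypothesis. First I would record the elementary observation that every quantity $\mathbf i k+\langle m,\lambda\rangle-\lambda_i$ under consideration lies in the additive subgroup $S:=\langle \mathbf i,\lambda_1,\dots,\lambda_n\rangle_{\mathbb{Z}}\subset\mathbb{C}$ generated by $\mathbf i$ and the eigenvalues (note that $\lambda_i\in S$, so the shift by $-\lambda_i$ keeps us inside $S$). Consequently the assertion of the lemma is equivalent to saying that $0$ is an isolated point of $S$, i.e. that $S$ is a discrete subgroup of $\mathbb{C}$: once this is known one simply takes $\epsilon$ to be any positive number strictly smaller than $\min\{|s| : s\in S\setminus\{0\}\}$.

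Next I would feed in the dynamical information through the resonant first integrals. By Lemma \ref{la2}, the $n-1$ functionally independent first integrals of \eqref{FF1} may be taken resonant, so each of them expands into pseudomonomials $y^l e^{\mathbf i k t}$ with $\langle\lambda,l\rangle+\mathbf i k=0$; in particular their exponents lie in the resonance module $\mathcal{M}:=\{(l,k)\in\mathbb{Z}^n\times\mathbb{Z}:\langle\lambda,l\rangle+\mathbf i k=0\}$. Functional independence of $n-1$ such series forces the rank of the lattice of occurring exponents to be large: if the joint support were contained in a sublattice of too small rank, then the Jacobian of the first integrals in $y$ could not have rank $n-1$ on a full measure set, since that Jacobian factors through the $y$-projection of the support lattice. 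Combining this rank estimate with the degeneracy coming from the zero eigenvalue of $A$ (tangent to $\Gamma$), I would deduce that the eigenvalues satisfy $n-1$ independent integral resonance relations, which pins the vector $(\lambda_1,\dots,\lambda_n,\mathbf i)$ to a one-dimensional rational subspace of $\mathbb{C}^{n+1}$. Normalizing its last coordinate to $\mathbf i$ then yields $\lambda_j=\nu_j\mathbf i$ with $\nu_j\in\mathbb{Q}$ for all $j$ — which is exactly Corollary \ref{la4}, obtained here as a by-product.

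With all eigenvalues of the form $\nu_j\mathbf i$, $\nu_j\in\mathbb{Q}$, the group $S$ becomes transparent: choosing a common denominator $q$ for $\nu_1,\dots,\nu_n$ gives $S\subseteq\tfrac{1}{q}\mathbf i\,\mathbb{Z}$, which is discrete. Hence every nonzero element of $S$ has modulus at least $1/q$, so $|\mathbf i k+\langle m,\lambda\rangle-\lambda_i|\ge 1/q$ whenever it is nonzero, and taking $\epsilon=1/(2q)$ finishes the proof.

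I expect the main obstacle to be the middle step, namely upgrading functional independence into the full-rank resonance conclusion (equivalently, the rationality in Corollary \ref{la4}). A naive support/rank count only shows that the exponent lattice has rank at least $n-1$, but the zero eigenvalue makes the tangential direction $y_n$ degenerate — every resonant pseudomonomial may be multiplied by an arbitrary power of $y_n$ — so one must argue carefully that the first integrals genuinely produce $n-1$ resonance relations among the nonzero eigenvalues rather than merely $n-2$. Showing that the first integrals, after accounting for their $t$-dependence, remain independent in the $n-1$ transverse directions is the delicate point; the possible non-diagonalizability of $A$ emphasized after Theorem \ref{th1} adds a technical complication that I would handle by passing to the Jordan structure of $A$ and treating its nilpotent part separately.
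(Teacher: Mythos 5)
Your overall strategy coincides with the paper's: use Lemma \ref{la2} to make the $n-1$ first integrals resonant, conclude that the resonance lattice $\mathcal M=\{(k,l):\mathbf i k+\langle\lambda,l\rangle=0\}$ has rank at least $n-1$, deduce that every $\lambda_j$ is a rational multiple of $\mathbf i$ (Corollary \ref{la4}), and then obtain the uniform gap from a common denominator, exactly as in your final paragraph (the paper gets $|\mathbf i k+\langle m,\lambda\rangle-\lambda_j|\ge 1/\nu$ the same way). Your opening reduction to discreteness of the group $S=\langle\mathbf i,\lambda_1,\dots,\lambda_n\rangle_{\mathbb Z}$ is a clean reformulation and is correct.

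The genuine gap is precisely the step you flag in your last paragraph and then leave unresolved: passing from ``the first integrals are resonant and functionally independent'' to ``there are $n-1$ resonance relations that actually determine $(\mathbf i,\lambda_1,\dots,\lambda_{n-1})$ up to a one-dimensional rational line.'' This has two halves, and you prove neither. For the first half (rank of $\mathcal M$ is at least $n-1$), the paper does not redo the support/rank count you sketch; it cites Theorem 1.1 of \cite{CY}, which states that the number of functionally independent analytic or formal first integrals is bounded by the rank $R_\lambda$ of $\mathcal M$ --- that is the external input you would have to reprove, and your one-sentence Jacobian argument does not yet handle the $t$-dependence or the formal (non-convergent) case. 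For the second half, your own worry is exactly right: since $\lambda_n=0$, the trivial relation $(k,l)=(0,e_n)$ always lies in $\mathcal M$, so rank $n-1$ for $\mathcal M$ does not by itself give $n-1$ independent relations among $(\mathbf i,\lambda_1,\dots,\lambda_{n-1})$; one relation may be absorbed by the tangential direction, leaving a two-dimensional solution space in which rationality fails. The paper handles this by a concrete case analysis on which $(n-1)\times(n-1)$ minor of the matrix of exponents \eqref{F7.1}--\eqref{F7.2} is nonsingular (and a reduction of the case $R_\lambda=n$ to $R_\lambda=n-1$), solving the resulting linear system over $\mathbb Q$; you would need to supply an argument of this kind, since ``pins the vector to a one-dimensional rational subspace'' is asserted rather than derived. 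One small remark in your favor: the non-diagonalizability of $A$, which you propose to treat via its Jordan structure, is actually irrelevant to this lemma --- the statement and both proofs depend only on the eigenvalues; the Jordan blocks only matter later, in the convergence estimates of Subsection \ref{s3.3}.
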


\begin{proof}
Recall that the constant matrix $A$ in \eqref{FF1} has a zero eigenvalue, which was prescribed in Section \ref{s1}. Without loss of generality we set $\lambda_n = 0$.
By Theorem 1.1 of  \cite{CY}, we know that the number of functionally independent analytic or formal first integrals of system \eqref{FF1} is less than or equal to $R_\lambda$, where $R_\lambda$ is the rank of the $\mathbb Z$--linear space spanned by
\[
\left\{(k,l); \,\, {\bf i}k + \langle \lambda, l\rangle = 0, k \in \mathbb{Z}, l \in \mathbb{Z}_+^n\right\},
\]
Recall that $\lambda = (\lambda_1, ...,\lambda_n)$
is the $n$--tuple of eigenvalues  of $A$. By Lemma \ref{la2} and  the assumption of this lemma we get that $R_\lambda \geq n-1$.  On the other hand, we have clearly $R_\lambda \leq n$.

\noindent{\it Case }1. $R_\lambda = n-1$. Then there are $n - 1$ linearly independent vectors in $\mathbb{Z} \times \mathbb{Z}_+^n$, saying
\[
(p_1, m_{1,1}, ..., m_{1,n}),\,\, (p_2, m_{2,1}, \,\, ...,\,\,  m_{2,n}), \,\, ..., \,\, (p_{n-1},\,\, m_{n-1,1},\,\, ..., \,\,m_{n-1,n}),
\]
such that
\begin{eqnarray}
{\bf i} p_1 + m_{1,1} \lambda_1 + ... + m_{1,n-1}\lambda_{n-1} = 0,   \nonumber \\
{\bf i} p_2 + m_{2,1} \lambda_1 + ... + m_{2,n-1}\lambda_{n-1} = 0,   \nonumber \\
\vdots\qquad\qquad\qquad\qquad      \nonumber \\
{\bf i} p_{n-1} + m_{n-1,1} \lambda_1 + ... + m_{n-1,n-1}\lambda_{n-1} = 0.   \nonumber
\end{eqnarray}

Without loss of generality, we  just need to consider the following two cases:
\begin{equation}\label{F7.1}
\mbox{det} \left (\begin{array}{cccc}
m_{1,1} &  m_{1,2} & ... & m_{1,n-1}\\
m_{2,1} &  m_{2,2} & ... & m_{2,n-1}\\
... &  ... & ... & ...\\
m_{n-1,1} &  m_{n-1,2} & ... & m_{n-1,n-1}\\
\end{array}\right )\neq 0,
\end{equation}
or
\begin{equation}\label{F7.2}
\mbox{det} \left (\begin{array}{cccc}
p_{1} &  m_{1,2} & ... & m_{1,n-2}\\
p_{2} &  m_{2,2} & ... & m_{2,n-2}\\
... &  ... & ... & ...\\
p_{n-1} &  m_{n-1,2} & ... & m_{n-1,n-2}\\
\end{array}\right )\neq 0.
\end{equation}
In Case \eqref{F7.1}, we have
\[
\left( \begin{array}{c}
\lambda_1 \\
\vdots   \\
\lambda_{n-1}
\end{array}
\right) = \left( \begin{array}{ccc}
m_{1,1} & ...  &m_{1,n-1}  \\
\vdots & \vdots  & \vdots  \\
m_{n-1,1} & ...  &m_{n-1,n-1}
\end{array}
\right )^{-1}  \left( \begin{array}{c}
- p_1 {\bf i} \\
\vdots \\
- p_{n-1} {\bf i}
\end{array}
\right).
\]
Thus, it follows that
\[
\lambda_1 = \frac{\mu_1}{\nu} {\bf i}, \lambda_2= \frac{\mu_2}{\nu} {\bf i}, ..., \lambda_{n-1} = \frac{\mu_{n-1}}{\nu} {\bf i},
\]
with $\mu_i \in \mathbb{Z}$ for $i = 1, 2, ..., n-1$, and $\nu \in \mathbb N$.

For ${\bf i} k + \langle m, \lambda\rangle - \lambda_j \neq 0$, $k\in \mathbb Z$, $ m \in \mathbb Z_+^n$ and $|m| \geq 2$, the following holds
\[
\left| {\bf i}k +  \langle m, \lambda\rangle - \lambda_j \right| = \left| \frac{k\nu + m_1 \mu_1 + ... + m_{n-1} \mu_{n-1}-\mu_{j}}{\nu}\right| \geq \frac{1}{\nu}.
\]

In Case \eqref{F7.2}, we have
\[
\left( \begin{array}{c}
{\bf i}\\
\lambda_1 \\
\vdots \\
\lambda_{n-2}
\end{array}
\right) = \left( \begin{array}{cccc}
p_1 & m_{1,1} & ...  &m_{1,n-2}  \\
p_2 & m_{2,1} & ... & m_{2,n-2}\\
\vdots & \vdots  &  \vdots & \vdots  \\
p_{n-1} & m_{n-1,1} & ...  &m_{n-1,n-2}
\end{array}
\right )^{-1} \left( \begin{array}{c}
m_{1,n-1} \lambda_{n-1} \\
m_{2,n-1} \lambda_{n-1} \\
\vdots \\
m_{n-1,n-1} \lambda_{n-1}
\end{array}
\right).
\]
Clearly we have $\lambda_{n-1}\ne 0$.
Then for ${\bf i} k + \langle m, \lambda\rangle - \lambda_j \neq 0$, $k\in\mathbb Z$, $ m \in \mathbb Z_+^n$ and $|m| \geq 2$, we also have
\[
\left| {\bf i}k +  \langle m, \lambda\rangle - \lambda_j \right|  \geq \frac{1}{\mu},
\]
for some $\mu\in\mathbb N$.

\noindent{\it Case }2. $R_\lambda = n$. Recall that $\lambda_n = 0$.  There are $n$ linearly independent vectors in $\mathbb{Z} \times \mathbb{Z}_+^n$, saying
\begin{equation}\label{F7.3}
(0, 0, \, ...,\, 1),\, (p_1, m_{1,1},\,  ...,\,  m_{1,n}), \,  ..., \, (p_{n-1}, m_{n-1,1}, \, ...,\,  m_{n-1,n}),
\end{equation}
such that
\[
\left(
\begin{array}{cccc}
0 & 0 & ... & 1\\
p_1 & m_{1,1} &  ... &  m_{1,n}\\
\vdots & \vdots & \vdots & \vdots \\
p_{n-1} &  m_{n-1,1} & ... &  m_{n-1,n}
\end{array}
\right) \left(\begin{array}{c}
\mathbf i\\
\lambda_1\\
\vdots\\
\lambda_n
\end{array}
\right)=0.
\]
From \eqref{F7.3} we get that the vectors
\[
(p_1, m_{1,1}, ..., m_{1,n-1}),  ..., (p_{n-1}, m_{n-1,1}, ..., m_{n-1,n-1}) \in \mathbb{Z} \times \mathbb{Z}_+^{n-1},
\]
are linearly independent. This implies that we can reduce this case to the case $R_\lambda = n-1$. So, working in a similar way as in the proof of the case $R_\lambda = n-1$, we can finish the proof of the lemma.
\end{proof}

\subsection{Proof of Theorem \ref{th1}}\label{s3.3}
Recall that system \eqref{F1} near a periodic orbit can be reduced to the  periodic normal form  system \eqref{F4} near the trivial solution $x=0$ by an analytic invertible transformation.

In what follows we shall prove the convergence of the normalization which transforms system \eqref{F4} to its distinguish normal form \eqref{FF1}.
By the assumption of the theorem and Lemma \ref{la2} together with the special form of the transformation \eqref{FF2} we get that system \eqref{FF1} has $n-1$ functionally independent formal first integrals.

For $w(z) \in \{f_j, \phi_j,  g_j\}$ with $f_j,\, \phi_j$ and $\phi_j$ the $j$th components of $F, \Phi$ and $G$ respectively, we expand them in the Taylor series in $z$
\[
w(z) = \sum_{k \in \mathbb{Z}_+} \sum_{l \in \mathbb{Z}_+^n,\, |l| = k } w^l (t) z^l,
\]
where the coefficients $w^l(t)$ are periodic functions.

For convenience to notations, we define an order in $\mathbb Z_+^n$: for $k=(k_1, k_2, ..., k_n), l= (l_1, l_2, ..., l_n)\in \mathbb Z_+^n$, we say $k \succ l$, if either
$|k| < |l|$ or  $|k| = |l|$ and there is a number $s \in \{1,2,...,n\}$ such that $k_j = l_j$ for $j \in \{1,2,...,s-1\}$ and $k_s < l_s$.

Using the calculations in \cite{Bib,Li} (see also \cite{Zh,Zh2013}) and the order defined in the last paragraph, we get from equation \eqref{F5}  that $\phi^l_s(t)$ satisfy the equality
\begin{eqnarray} \label{F6}
\partial_t \phi^l_s(t) &+& (\langle l,\lambda\rangle - \lambda_s)\phi^l_s(t) \nonumber\\
&+& \sum^{n}_{j=2} (1 + l_j) \sigma_j \phi^{l-e_{j-1} + e_j}_s(t)- \sigma_s \phi^l_{s-1}(t) \\
&=&\left[ f_{s} (y + \Phi(y,t))\right]^{l} -  g^l_s(t) - \sum_{j=1}^{n} \sum_{k < l, k \in \mathbb{Z}_+^n} \phi^k_s(t) k_j g^{l-k+e_j}_j(t),  \nonumber
\end{eqnarray}
where $\left[ f_{s} (y + \Phi(y,t))\right]^{l}$ is the coefficient of $y^l$ obtained after  re--expanding $f_s(y + \Phi(y,t))$ in the
power series in $y$, and $k<l$ means $k - l \in \mathbb{Z}_+^n$, and
$\sigma_j$ are the elements just under the diagonal entries of the matrix $A$, i.e.
\[
A = \left(\begin{array}{llll}
\lambda_1 & 0 & ... & 0 \\
\sigma_2 & \lambda_2 & ... & 0 \\
\vdots & \ddots & \ddots & \vdots      \\
0& ...& \sigma_n & \lambda_n
\end{array}
\right).
\]
For the Jordan normal form $A$ to be lower triangular, we need the complex coordinates instead of the real ones as did in \cite{Zh2011}.

Further, we expand the periodic  functions $\omega^l_s(t)$, $\omega\in\{\phi,g\}$,  into Fourier series
\[
\omega^l_s(t) = \sum_{m \in \mathbb{Z}} \omega^l_{s,m} e^{{\bf i} m t},
\]
Substituting these expansions in \eqref{F6} and equating the coefficients of $e^{\mathbf i mt}$ give
\begin{eqnarray}\label{F7}
({\bf i} m &+& \langle l, \lambda\rangle - \lambda_s) \phi^{l}_{s, m} \nonumber\\
&+& \sum^{n}_{j=2} (1+l_j) \sigma_j \phi^{l-e_{j-1}+e_{j}}_{s} - \sigma_s \phi^l_{s-1, m}  \\
& =& \left[ f_s(y + \Phi(y, t))\right]^l_{,m} - g^l_{s, m} - \sum^{n}_{j=1} \sum_{k<l} \sum_{m_1+m_2=m} \phi^k_{s,m_1} k_j g^{l-k+e_j}_{j,m_2},\nonumber
\end{eqnarray}
where $\left[ f_s(y + \Phi(y, t))\right]^l_{,m}$, $m\in \mathbb Z$, are the Fourier coefficients in the Fourier expansion of  $\left[ f_s(y + \Phi(y, t))\right]^l$.

Since we consider the distinguish normal form, by Subsection \ref{s3.1}, if
\[
{\bf i} m + \langle l, \lambda\rangle - \lambda_s = 0,
\]
equation \eqref{F7} has the solutions
\begin{eqnarray}\label{F8}
\phi^{l}_{s, m} &=& 0, \nonumber \\
 g^l_{s, m}&=&\sum^{n}_{j=2} (1+l_j) \sigma_j \phi^{l-e_{j-1}+e_{j}}_{s} - \sigma_s \phi^l_{s-1, m} \nonumber\\
& & - \left[ f_s(y + \Phi(y, t))\right]^l_{,m}+  \sum^{n}_{j=1} \sum_{k<l} \sum_{m_1+m_2=m} \phi^k_{s,m_1} k_j g^{l-k+e_j}_{j,m_2}.
\end{eqnarray}
If
\[
{\bf i} m  + \langle l, \lambda\rangle - \lambda_s \neq 0,
\]
equation \eqref{F7} with the choice $g^l_{s, m} = 0$ has the solution
\begin{eqnarray}\label{F9}
\phi^l_{s, m}&=& ({\bf i} m  + \langle l, \lambda\rangle - \lambda_s)^{-1}\nonumber\\
 &&\times \left(  -\sum^{n}_{j=2} (1+l_j) \sigma_j \phi^{l-e_{j-1}+e_{j}}_{s} + \sigma_s \phi^l_{s-1, m}\right.   \\
 & &\left. + \left[ f_s(y + \Phi(y, t))\right]^l_{,m} - \sum^{n}_{j=1} \sum_{k<l} \sum_{m_1+m_2=m} \phi^k_{s,m_1} k_j g^{l-k+e_j}_{j,m_2}\right).  \nonumber
\end{eqnarray}
Summarizing the above calculations, we achieve the distinguished normalization
\[
x_s = y_s + \sum_{l \in \mathbb{Z}_+^n ,|l| > 1, m \in \mathbb{Z}} \phi^l_{s, m} y^l e^{{\bf i} m t},\qquad s=1,\ldots,n,
\]
with $\phi^l_{s, m}$ satisfying \eqref{F9}, and the distinguished normal form
\[
\dot y_s  = (\sigma_s y_{s-1} + \lambda_s y_s) + \sum_{l \in \mathbb{Z}_+^n ,|l| > 1, m \in \mathbb{Z}} g^l_{s, m} y^l e^{{\bf i} m t},\quad s = 1, 2, \ldots, n,
\]
with $g^l_{s,m}$ satisfying \eqref{F8}.

From Lemma \ref{la3}, there exists a positive number $\varepsilon > 0$ such that if
\[
{\bf i} m + \langle l, \lambda\rangle - \lambda_s \neq 0,\,\,  m \in \mathbb{Z},\,
l \in \mathbb{Z}_+^n,\, |l|\geq 2,
\]
we have
\[
\left|{\bf i} m  + \langle l, \lambda\rangle - \lambda_s\right| > \varepsilon.
\]

Next we  estimate $\phi^l_{s,m}$ in \eqref{F9}. Some calculations show that
\begin{eqnarray}\label{F10}
|\phi^l_{s,m}| &\leq& \varepsilon^{-1}\left |\left[ f_s(y + \Phi(y, t))\right]^l_{,m}\right| +  \varepsilon^{-1} \sigma \left |\phi^l_{s-1, m}\right|  \nonumber\\
                & &  + \left|{\bf i} m  + \langle l, \lambda\rangle - \lambda_s\right|^{-1} \sum^{n}_{j=2} (1+l_j) \sigma_j \left|\phi^{l-e_{j-1}+e_{j}}_{s}\right| \nonumber \\
                & & + \left|{\bf i} m  + \langle l, \lambda\rangle - \lambda_s\right|^{-1} \sum^{n}_{j=1} \sum_{k<l} \sum_{m_1+m_2=m} \left |\phi^k_{s,m_1} k_j g^{l-k+e_j}_{j,m_2}\right | \\
                &  := & I + II + III + IV.\nonumber
\end{eqnarray}
From the proof of Lemma \ref{la3} or Lemma \ref{la4}, we know that
 \[
 \lambda_1 = \nu_1 {\bf i},\,\,  ..., \,  \lambda_{n-1} = \nu_{n-1} {\bf i},\,\, \lambda_n =0,\,\,\,\, \nu_j \in \mathbb{Q}
  \]
So there is a positive number $d_1$ such that
\[
|{\bf i} m  +\langle l, \lambda\rangle - \lambda_s|^{-1} (1+l_j) \leq d_1.
\]
This shows that
\[
III \leq d_1 \sigma \sum^{n}_{j=2} |\phi^{l-e_{j-1}+e_{j}}_{s}|,
\]
where $\sigma = \max\{\sigma_2, \sigma_3, ..., \sigma_n\}$.

If $g^{l-k+e_j}_{j,m_2} \neq 0$, then $\langle l-k+e_j, \lambda\rangle = \lambda_j$ and $\langle l, \lambda\rangle =\langle k, \lambda\rangle$. Similarly, there is a positive number $d_2$ such
that
\[
|{\bf i} m  + \langle l, \lambda\rangle - \lambda_s|^{-1} |k_j| =  |{\bf i} m  + \langle k, \lambda\rangle - \lambda_s|^{-1} |k_j| \leq d_2.
\]
Thus we have
\[
IV \leq d_2 \sum^{n}_{j=1} \sum_{k<l} \sum_{m_1+m_2=m} |\phi^k_{s,m_1}| |g^{l-k+e_j}_{j,m_2}|.
\]

Since the function $F(x,t) = (f_1, f_2, ..., f_n)$ is analytic with $x$  in a neighborhood of $x=0$ and $t\in\mathbb R$, by the Cauchy inequality and \cite[Lemma 2.2]{WZ},
there is a set $\mathcal{D} := \{(x,t);\, t\in [0,T], |x_s|\le r,  s=1,2,...,n \}$ such that
\[
|[f_s]^k_{,m}| \leq M r^{-|k|-|m|},   \qquad    M=\max_s \sup_{\partial \mathcal{D}} \{|f_s| \},
\]
where $|k| = k_1 + k_2 + ... + k_n$ and $|m|$ represents the absolute value of $m\in\mathbb Z$.

Define
\[
\widehat{f}(x,t) = M \sum^\infty_{\begin{subarray}{c}  |k| = 2 \\  k\in \mathbb Z_+^n\end{subarray}} \sum_{m \in \mathbb{Z}} r^{-|k|-|m|} x^k e^{{\bf i} m t}.
\]
This is an analytic function in the interior of $\mathcal{D}$, and is a majorant series of $f_s$, $s=1,2,...,n$. In the following, we denote
by $\widehat{w}$ the majorant series of a given series $w$.

Using the estimations on $III$ and $IV$, we get from \eqref{F10} that
\begin{eqnarray}
|\phi^l_{s,m}| &\leq& \varepsilon^{-1} \left |\left[ \widehat{f}(y + \widehat{\Phi})\right]^l_{,m}\right| +  \varepsilon^{-1} \sigma\left |\phi^l_{s-1, m}\right |  \nonumber\\
                & &  + d \sigma \sum^{n}_{j=2} \left |\phi^{l-e_{j-1}+e_{j}}_{s}\right | + d \sum^n_{j=1} \sum_{m_1+m_2=m}\left |(\widehat{\phi}_{s,m_1} \widehat{g}_{j,m_2})^{l + e_j}\right |, \label{F11}
               \end{eqnarray}
where $d = \max\{d_1, d_2\}$.

Since the coefficients in $\widehat{\phi}_s$ are all positive, the convergence of the series $\sum\limits^n\limits_{s=1} \widehat{\phi}_s$ is equivalent to that in the case
$y_1=y_2=...=y_n=z$. Set
\[
\overline{\phi} = \sum^{\infty}_{p=2} \sum_{m \in \mathbb{Z}} \bar{\phi}^p_{,m} z^p e^{{\bf i} m t}, \qquad \bar{\phi}^p_{,m}=\sum_{\begin{subarray}{c} |l|=p\\ l\in \mathbb Z_+^n\end{subarray}}
\sum^n_{s=1} |\phi^l_{s,m}|
\]
Under these notations we get from \eqref{F11} that
\begin{eqnarray*}
\bar{\phi}^p_{,m} &\leq & \varepsilon^{-1} \left|\left[ \widehat{f}(z + \bar{\Phi})\right]^p_{,m}\right | + \varepsilon^{-1} \sigma \bar{\phi}^p_{,m}\\
&&
+ d \sigma (n-1) \bar{\phi}^p_{,m} + d \sum^n_{j=1} \left(\bar{\phi} \hat{g}_j(z,...,z,t)z^{-1}\right)^p_{,m},
\end{eqnarray*}
where $(\cdot)^p_{,m}$ represents the coefficient of $z^p e^{{\bf i} m t}$.

Set
\begin{eqnarray*}
\Gamma(z,t,h)&=& h - \varepsilon^{-1} n \widehat{f}(z+h,...,z+h,t)- \varepsilon^{-1} \sigma h \\
&& \qquad -d \sigma (n-1) h - d \sum^n_{j=1}h \hat{g}_j(z,...,z,t)z^{-1}.
\end{eqnarray*}
Obviously, $\Gamma(z,t,h)$ is analytic in a neighborhood of $\{0\}\times [0,T]\times \{0\}$, and it satisfies
\[
\Gamma(0,t,0)=0, \quad \frac{\partial \Gamma}{\partial h}(0,t,0)=1 - \varepsilon^{-1} \sigma - d \sigma (n-1), \quad t\in [0,T].
\]
By the linear algebra, we can take an invertible linear transformation such that the elements $\sigma_j$ of the matrix $A$ are suitably  small and keeping the eigenvalues of $A$ do not change. So we have that $\sigma$ is suitably small. Under this surgery we get $\frac{\partial \Gamma}{\partial h}(0,t,0) > 0$, $t\in [0, T]$. By the Implicit Function Theorem, it follows that $\Gamma(z,t,h)=0$ has a unique analytic solution $h(z,t)$ defined in a neighborhood of $\{0\}\times [0,T]$. Then we get from \cite{Hi} or \cite{Zh} that the distinguished normalization
\[
x_s = y_s + \sum_{l \in \mathbb{Z}_+^n ,|l| > 1, m \in \mathbb{Z}} \phi^l_{s, m} y^l e^{{\bf i} m t}, \quad s=1,...,n,
\]
is convergent in  a neighborhood of $\{0\}\times [0,T]$. This proves that system \eqref{F4} is analytically equivalent to its distinguished normal form \eqref{FF1}. Consequently system \eqref{F1} is analytically equivalent to its distinguished normal form \eqref{FF1}.

We complete the proof of the theorem.

\medskip

\noindent{\bf Acknowledgements.} The authors sincerely appreciate the referee for his/her careful reading and the nice comments and suggestions,
which help us to improve the presentation of this paper.

\bigskip
\bigskip

\end{document}